\documentclass{aptpub}

\usepackage{graphicx}

\authornames{G. Vasdekis and G. O. Roberts} % insert the authors here for use in running head. If three or more authors please use (for example) M.~YARROW {\it et al}. Author names should follow the same M.~YARROW format and if two authors, separate by 'AND'.
\shorttitle{A Note on the Polynomial Ergodicity of the One-Dimensional Zig-Zag process} % insert short title here for use in running head

% Put any of your own definitions here.

%\numberwithin{equation}{section}  % If you number theorems, etc. within sections,
                                   % then please uncomment this line to number
                                   % equations with sections too.

\DeclareMathOperator{\sgn}{sgn}

\begin{document}%\recd{}{}%Do not alter this line.

\title{A Note on the Polynomial Ergodicity of the One-Dimensional Zig-Zag process} % insert title - use \\ if it requires more than one line.

\authorone[University of Warwick]{G. Vasdekis}
\authortwo[University of Warwick]{G. O. Roberts} 
%Affiliation is just the name of your university or institution, for example 'University of Sheffield'. Author names should be of the form 'Mark Yarrow'. 
%Authors should be ordered alphabetically subject to the convention in that particular authors country. For example 'Remco van der Hofstad' would be listed under 'H' as is standard in the Netherlands. 

%Please use the following format for addresses and emails. The APT office will sort this out after you submit your files.
\addressone{Department of Statistics,
University of Warwick,
Coventry,
CV4 7AL} % Your postal address goes here.
\emailone{Giorgos.Vasdekis.1@warwick.ac.uk} %Authors email goes here.
\addresstwo{Department of Statistics,
University of Warwick,
Coventry,
CV4 7AL}
\emailtwo{Gareth.O.Roberts@warwick.ac.uk}
\begin{abstract}
We prove polynomial ergodicity for the one-dimensional Zig-Zag process on heavy tailed targets and identify the exact order of polynomial convergence of the process when targeting Student distributions.
% text of abstract goes here!
\end{abstract}

\keywords{Piecewise Deterministic Markov Process, Markov Chain Monte Carlo, Polynomial Ergodicity.}%insert keywords separated by a semicolon. You should avoid including keywords which also appear in the title.

\ams{60J25}{65C05, 60F05.}% insert the primary 2020 Maths Subject Classification number in the first bracket
		% and the secondary ams number(s) in the second bracket
		% e.g. \ams{60E20}{49G03;49F10}
		%Maximum of three in each, ideally one or two in each primary and secondary.
		%codes found here ``https://mathscinet.ams.org/msnhtml/msc2020.pdf''

\section{Introduction}

 The Zig-Zag process is a Piecewise Deterministic Markov Process (PDMP) that was recently used as a new way to construct MCMC algorithms. The one dimensional Zig-Zag appeared in \cite{bierkens.roberts_scaling:2017} as a scaling limit of the Lifted Metropolis-Hastings (see \cite{turitsun.chertkov.vucelja:11,diaconis.holmes.neal:00}) applied to the Curie-Weiss model (see \cite{levin.luczak.peres:07}). 
 The process was later extended in higher dimensions in \cite{bierkens.roberts_superefficient:2019} and has been proposed as a way to sample from posterior distributions in a Bayesian setting (see also \cite{fearnhead.bierkens.pollock.roberts:18, vanetti.cote.deligiannidis.doucet:17}). Since then, its properties have been extensively studied in the literature (see for example \cite{bierkens.duncan:17, bierkens.kamatani.roberts:18, bierkens.lunel:19, bierkens.nyquist.schlottke:19} etc.).
 
\cite{bierkens.roberts.zitt:2019} proves ergodicity and exponential ergodicity of the Zig-Zag process in arbitrary dimension, however a crucial assumption required for exponential ergodicity  in that work is that the target density has exponential or lighter tails. In \cite{vasdekis.roberts.suzz:21} the converse result was proven. The Zig-Zag sampler fails to be exponentially ergodic when the target distribution is heavy tailed.
On the other hand, it was shown in Theorem 1 of \cite{bierkens.roberts.zitt:2019} that the process will converge to the invariant measure under very mild assumptions, including the heavy tailed case. Furthermore,  \cite{andrieu.dobson.wang:21} used hypocoercivity techniques (see also \cite{andrieu.durmus.nusken.roussel:18}) to prove polynomial rates of convergence for the Zig-Zag process on heavy tailed targets in arbitrary dimension.

In this note we will focus on the one-dimensional Zig-Zag process and prove polynomial ergodicity in the heavy tailed scenario. The result applies in the special case where the tails of the target decay in the same manner as a Student distribution with $\nu$ degrees of freedom. In that case, we prove that the polynomial rate of convergence is arbitrarily close to $\nu$, but not more than $\nu$. This improves upon the result stated in \cite{andrieu.dobson.wang:21} in the special case where $d=1$, although their work provides convergence results for higher dimensions as well. 

The rest of this paper is organised as follows. In Section 2 we recall the definition of the one-dimensional Zig-Zag process and we state the main result concerning its polynomial ergodicity in heavy tailed targets. In Section 3 we provide proof of this result. Finally, in Section 4 we discuss the rates of polynomial convergence of the Zig-Zag process and compare them with the ones of other state of the art Metropolis-Hastings algorithms.

\section{Results}
We begin with recalling the definition of the one-dimensional Zig-Zag process. Let $E = \mathbb{R} \times \{ -1,+1 \}$, $U \in C^1(E)$ and $\lambda:E \rightarrow \mathbb{R}_{\geq 0}$ with 
\begin{equation}
    \lambda(x,\theta)=[\theta U'(x)]^++\gamma(x),
\end{equation}
where $\gamma$ is a non-negative integrable function and we write $a^+=\max\{ a,0 \}$. The one-dimensional Zig-Zag process $(Z_t)_{t \geq 0}=(X_t,\Theta)_{t \geq 0}$ is a continuous time Markov process with state space $E$ which evolves as follows. If the process starts from $(x,\theta) \in E$, then $X_t=x+t\theta$ and $\Theta_t=\theta$ for all $t < T_1$, where $T_1$ is the first arrival time of a non-homogeneous Poisson process with rate $m_1(s)=\lambda(x+s\theta,\theta)$. Then $X_{T_1}=x+T_1\theta$, $\Theta_{T_1}=-\theta$. Then, $X_{t}=X_{T_1}+(t-T_1)\Theta_{T_1}$ and $\Theta_t=\Theta_{T_1}$ for all $t \in (T_1,T_1+T_2)$, where $T_2$ is the first arrival time of a Poisson process with intensity $m_2(s)=\lambda(X_{T_1}+s\Theta_{T_1},\Theta_{T_1})$. The process is then defined inductively up to time $T_n$ for all $n \in \mathbb{R}$.

It is proven in \cite{bierkens.roberts_superefficient:2019} that with this choice of $\lambda$, the process has measure $\mu$ invariant, where
\begin{equation}\label{def.mu:1}
    \mu=\pi \bigotimes \frac{1}{2}\delta_{\{-1,+1\}}
\end{equation}
and
\begin{equation}
\pi(dx)=\frac{1}{Z}\exp\{ -U(x) \}dx
\end{equation}
with $Z=\int_{\mathbb{R}}\exp\{ -U(y) \}dy<\infty$.

Since this note concerns the polynomial ergodicity of the one-dimensional Zig-Zag process, we now recall the definition of polynomial ergodicity.

\begin{definition}
Let $(Z_t)_{t \geq 0}$ be a Markov process with state space $E$, having invariant probability measure $\mu$ and let $k>0$. We say that the process is polynomially ergodic of order $k$ if there exists a function $M:E \rightarrow \mathbb{R}_{>0}$ such that for all $z \in E$ and $t \geq 0$
\begin{equation*}
\| \mathbb{P}_z\left( Z_t \in \cdot \right) - \mu (\cdot) \|_{TV}    \leq \frac{M(z)}{t^k},
\end{equation*}
where $\mathbb{P}_z$ denotes the law of the process starting from $z$.
\end{definition}

We will make the following assumption, typically verified in practice.

\begin{assumption}\label{as:1}
Assume that there exists an $\nu>0$ and a compact set $C \subset \mathbb{R}$ such that for all $x \notin C$, 
\begin{equation}\label{gradient.log.lik.growth.assump.poly:1}
|U'(x)| \geq \dfrac{1+\nu}{|x|}.
\end{equation}
\end{assumption}

\begin{remark}
This assumption directly implies that there exists a $c'$ such that for all $x \in \mathbb{R}$,
$U(x) \geq (1+\nu) \log(|x|)-c'$. This is an assumption made in \cite{bierkens.roberts.zitt:2019} in order to prove non-evanescence of the Zig-Zag process and it's a natural assumption given that the function $\exp\{ -U(x) \}$ must be integrable.
\end{remark}

\noindent We also need the following assumption for the refresh rate $\gamma$.
\begin{assumption}\label{as:2}
Assume that the refresh rates satisfy
\begin{equation}
\lim_{|x|\rightarrow \infty}\dfrac{\gamma(x)}{\left | U'(x) \right |}=0.
\end{equation}
\end{assumption}

\noindent 
Assumption \ref{as:2} ensures that the bouncing events will vastly outnumber the refresh events, at least in the tails of the target. Whilst we have not been able to establish the necessity of this assumption for the conclusions of Theorem \ref{prop.drift:1}, some control over 
$\dfrac{\gamma(x)}{\left | U'(x) \right |}$ is definitely needed.
Indeed in the regime where $\lim_{|x|\rightarrow \infty}\dfrac{\gamma(x)}{\left | U'(x) \right |}=+\infty$ (as would be the case with constant refresh rate and heavy-tailed target)
random direction changes would outnumber systematic ones, leading to
random walk/diffusive behaviour commonly associated with slow convergence.
In fact in this case, the algorithm would resemble a Random Walk Metropolis algorithm, which is known to converge at slower polynomial rate.
Therefore, we believe that some control of the refresh rate relative to $|U'|$ should be assumed for guarantees about specific rates of convergence to hold. This can be further supported by simulation studies. In Figure \ref{fif:mse.zz.t1} we present the Mean Square Error of estimating a tail probability
($\mathbb{P}(X \geq 5)$ for a standard Cauchy target)
using Zig-Zag with differing refresh rates.
We consider
$\gamma(x)=0$, $\gamma(x)=|U'(x)|$ and $\gamma(x)=1$. For each algorithm we generated 1000 independent realisations, all starting from $(-5,+1)$ and all realisations run until time $T=10^4$. For each time less than $T$ the average square error of the true probability (approximately equal to $0.0628$) is reported. It is clear that the smaller refresh rate leads to more rapid convergence.

\begin{figure}[ht]
\centering
\includegraphics[scale=1.5, width=12cm]{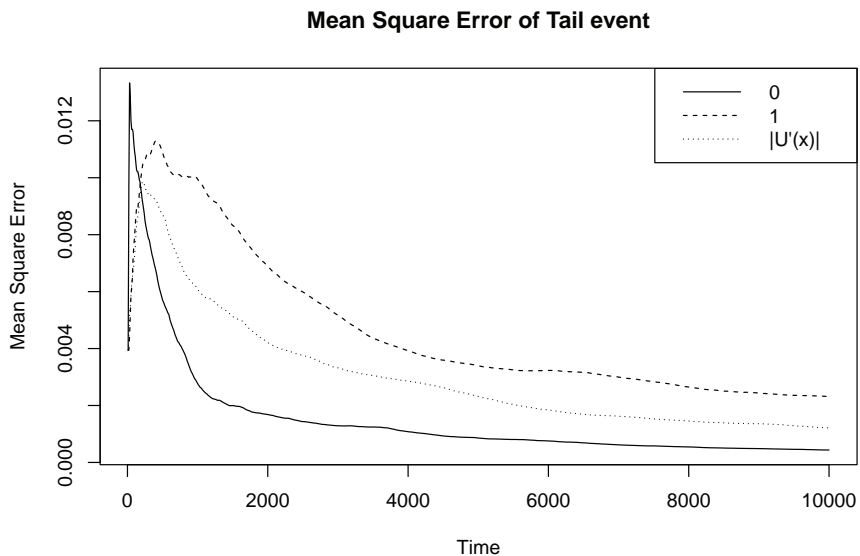}\label{fig:mse.zz.t1}
\caption{{\it Mean Square Error over time of three Zig-Zag algorithms with different refresh rates $\gamma(x)$, described in the upper right corner. The algorithms target the Cauchy distribution and are used to estimate the probability mass the Cauchy distribution assigns to the event $[5,+\infty)$. For all three algorithms, $1000$ independent realisations were generated and for each time less than $10^4$ the average square error between the approximation of the probability and the actual probability (approximately $0.0628$) are reported. Evidently, the fastest convergence is achieved by lowering the values of $\gamma$.}}\label{fif:mse.zz.t1}
\end{figure}

Our main result is the following.

\begin{theorem}[Polynomial ergodicity of Zig-Zag]
\label{prop.drift:1}
Suppose that $U$ satisfies Assumption \ref{as:1} and let $C$ and $\nu>0$ as in (\ref{gradient.log.lik.growth.assump.poly:1}). Suppose further that the refresh rate satisfies Assumption \ref{as:2}. 
Then, for any $k<\nu$, there exist constants $B,\delta>0$ and $\beta \in (0,1)$ such that if we let
\begin{equation}\label{as.drift:1}
V_{\beta, \delta}(x,\theta)=\exp \left \{ \beta U(x) + \delta \sgn(x) \theta \right \}
\end{equation}
then for all $(x,\theta) \in \mathbb{R} \times \{ -1,+1 \}$,
\begin{equation}\label{tv.bound:3}
\left \| \mathbb{P}_{x,\theta}(Z_t \in \cdot)-\mu(\cdot) \right \|_{TV} \leq \dfrac{B V_{\beta, \delta}(x,\theta)}{t^{1+k}}+\dfrac{B}{t^{k}},
\end{equation}
i.e. the process is polynomially ergodic of order $k$.
\end{theorem}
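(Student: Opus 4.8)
\noindent The plan is to establish a subgeometric Foster--Lyapunov drift inequality for the extended generator of the Zig-Zag process with Lyapunov function $V_{\beta,\delta}$, check the accompanying minorisation, and then quote a standard polynomial-rate ergodicity theorem. Recall that on functions that are absolutely continuous along the deterministic flow the generator acts as
\[
\mathcal{L}f(x,\theta)=\theta\,\partial_x f(x,\theta)+\lambda(x,\theta)\bigl(f(x,-\theta)-f(x,\theta)\bigr),
\]
and since $t\mapsto V_{\beta,\delta}(x+t\theta,\theta)$ is absolutely continuous (it is $C^1$ except at the single time at which $x+t\theta=0$), $V_{\beta,\delta}$ lies in the extended domain and, for $x\neq 0$,
\[
\frac{\mathcal{L}V_{\beta,\delta}(x,\theta)}{V_{\beta,\delta}(x,\theta)}=\beta\,\theta\,U'(x)+\lambda(x,\theta)\bigl(e^{-2\delta\,\sgn(x)\theta}-1\bigr).
\]

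First I would analyse this ratio in the tails. By (\ref{gradient.log.lik.growth.assump.poly:1}), $U'$ cannot vanish for $|x|$ large, so it keeps a constant sign on each of $(-\infty,-R]$ and $[R,\infty)$ for $R$ large; since $e^{-U}$ is integrable this sign is such that $\theta\,U'(x)=|U'(x)|$ exactly when the velocity points away from the origin, that is when $\sgn(x)\theta=+1$. In that case $\lambda(x,\theta)=|U'(x)|+\gamma(x)$ and the ratio equals $(\beta+e^{-2\delta}-1)|U'(x)|+(e^{-2\delta}-1)\gamma(x)$, which is at most $-(1-\beta-e^{-2\delta})|U'(x)|$ since $e^{-2\delta}-1<0$ and $\gamma\geq0$; this is a negative bound as soon as $\beta<1-e^{-2\delta}$. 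When the velocity points toward the origin, $\sgn(x)\theta=-1$, one has $\theta\,U'(x)=-|U'(x)|$ and $\lambda(x,\theta)=\gamma(x)$, so the ratio is $-\beta|U'(x)|+(e^{2\delta}-1)\gamma(x)$, and here Assumption~\ref{as:2} (i.e. $\gamma(x)=o(|U'(x)|)$) makes it at most $-\tfrac{\beta}{2}|U'(x)|$ once $|x|$ is large. Hence, given the target order $k<\nu$, I would fix $\beta\in\bigl(\tfrac{k+1}{\nu+1},1\bigr)$ and then $\delta>-\tfrac12\log(1-\beta)$, and deduce that there are a compact set $C$ (enlarged if necessary to an interval $[-R,R]$) and constants $c,b>0$ with
\[
\mathcal{L}V_{\beta,\delta}(x,\theta)\leq -c\,|U'(x)|\,V_{\beta,\delta}(x,\theta)+b\,\mathbf{1}_{C}(x).
\]

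Next I would upgrade this to a genuine subgeometric drift. Put $\rho:=1-\tfrac{1}{\beta(\nu+1)}$, which lies in $(0,1)$ because $\beta(\nu+1)>1$. Using the remark after Assumption~\ref{as:1} (so $U(x)\geq(1+\nu)\log|x|-c'$) together with (\ref{gradient.log.lik.growth.assump.poly:1}) again, one gets for $|x|$ large that $V_{\beta,\delta}(x,\theta)^{\rho-1}\leq e^{\delta/(\beta(\nu+1))}e^{-U(x)/(\nu+1)}\leq C_0|x|^{-1}\leq \tfrac{C_0}{1+\nu}|U'(x)|$, whence $|U'(x)|\,V_{\beta,\delta}(x,\theta)\geq \tfrac{1+\nu}{C_0}V_{\beta,\delta}(x,\theta)^{\rho}$ and therefore
\[
\mathcal{L}V_{\beta,\delta}(x,\theta)\leq -\tilde c\,V_{\beta,\delta}(x,\theta)^{\rho}+\tilde b\,\mathbf{1}_{C}(x)
\]
for suitable $\tilde c,\tilde b>0$. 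The one-dimensional Zig-Zag is $\mu$-irreducible and aperiodic and its compact sets with non-empty interior are petite, as established in \cite{bierkens.roberts.zitt:2019}, and $V_{\beta,\delta}$ has compact sublevel sets since $U\to\infty$. Feeding this drift into a standard subgeometric ergodicity theorem (of the type producing polynomial rates from a drift toward $-V^{\rho}$, as in the results of Jarner and Roberts and their continuous-time and strong-Markov extensions, or equivalently by passing to the $1$-skeleton) yields convergence of order $\tfrac{\rho}{1-\rho}=\beta(\nu+1)-1$, which exceeds $k$ by the choice of $\beta$, together with the refinement that the initial-condition term decays one power faster, i.e.
\[
\bigl\|\mathbb{P}_{x,\theta}(Z_t\in\cdot)-\mu(\cdot)\bigr\|_{TV}\leq B\,V_{\beta,\delta}(x,\theta)\,t^{-(1+k)}+B\,t^{-k},
\]
which is (\ref{tv.bound:3}).

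I expect the main obstacle to be the drift computation itself: making the sign analysis of $U'$ in both tails rigorous, checking all four combinations of $\sgn(x)$ and $\theta$ so that the term $\delta\,\sgn(x)\theta$ always cooperates with the bounce, treating the non-smoothness of $V_{\beta,\delta}$ at $x=0$ cleanly inside the PDMP/extended-generator framework, and invoking Assumption~\ref{as:2} economically so that the refresh contribution really is absorbed in the tails. Once the drift toward $-V_{\beta,\delta}^{\rho}$ is in hand, the passage to (\ref{tv.bound:3}) is a matter of quoting the appropriate subgeometric convergence theorem.
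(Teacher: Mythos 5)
Your proposal is correct and follows essentially the same route as the paper: compute the generator acting on $V_{\beta,\delta}$, split into the two cases $\sgn(x)\theta=\pm1$, use Assumption~\ref{as:2} to absorb the refresh rate in the tails, convert $|U'(x)|V_{\beta,\delta}$ into a power $V_{\beta,\delta}^{\rho}$ using Assumption~\ref{as:1} and the induced lower bound on $U$, and then invoke a subgeometric Foster--Lyapunov theorem. The paper's Section~\ref{proofs:00} cites Hairer's Theorem~4.1 (reproduced as Theorem~\ref{hairer:1}) for the last step and parametrises slightly differently — it fixes $a=k/(1+k)$ first (so $f(u)=cu^a$ gives exactly the exponents $1+k$ and $k$ via $H_f^{-1}$) and then picks $\beta$ close to $1$ so that $\beta(1-a)(1+\nu)>1$; you instead fix $\beta\in(\tfrac{k+1}{\nu+1},1)$ and derive $\rho=1-\tfrac{1}{\beta(\nu+1)}$, slightly overshooting the order and then dominating the required bound, which is equally valid. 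You are also marginally more careful than the paper about $V_{\beta,\delta}$ failing to be $C^1$ at $x=0$; the paper quietly absorbs this.

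One small point to tighten: the ``standard subgeometric ergodicity theorem'' you quote needs to be one that yields the two-term bound with the $V(x)$-dependent term decaying one power faster (as Hairer's Theorem~4.1, or Douc--Fort--Guillin Theorems~3.2/3.4, do); a bare Jarner--Roberts-style rate statement gives only the slower term. You acknowledge this refinement, but to make the argument fully rigorous you should cite and verify the hypotheses of a specific such theorem, as the paper does with Theorem~\ref{hairer:1}.
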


\begin{remark}\label{remark.2}
By carefully inspecting the proof of Theorem \ref{prop.drift:1}, we observe that Assumption \ref{as:2} can be weakened in the following sense. If $\nu$ is as in Assumption \ref{as:1} and if we fix $k<\nu$, then in order to prove polynomial ergodicity of order $k$, it suffices that there exists a small $\eta >0$ and that we ask that 
\begin{equation*}
    \lim_{|x| \rightarrow \infty}\frac{\gamma(x)}{|U'(x)|} \leq M
\end{equation*}
where 
\begin{equation}\label{M_k}
    M=M(k)=\left( \frac{k+1}{\nu +1}\left( 1+ \eta \right) -\eta \right)\frac{\left( 1- \frac{1+k}{1+\nu}\left( 1+\eta \right)  \right)^{1+\eta}}{1- \left( 1- \frac{1+k}{1+\nu}\left( 1+\eta \right) \right)^{1+\eta}}.
\end{equation}
We observe however that $M$ is not uniform in $k$. More precisely, assuming that $\eta$ is small enough so that $M(k)>0$ for all $k$, we have $\lim_{k \rightarrow \nu}M(k)=0$. A proof of this remark will be given in Section 3.
\end{remark}

An immediate corollary of Theorem \ref{prop.drift:1} is the following characterisation of the order of polynomial convergence of the one-dimensional Zig-Zag on Student distributions. For the following lower bound on the total variation distance from the invariant measure we use a type of argument similar to the proof of Theorem 2.1 in \cite{vasdekis.roberts.suzz:21}, suggested to us by Professor Anthony Lee in private communication.

\begin{corollary}\label{student.corollary:0}
Let $\nu>0$ and suppose $\pi$ is a Student distribution with $\nu$ degrees of freedom, i.e. 
\begin{equation}\label{student.equation:1}
    \pi(x) = \frac{1}{Z} \left ( 1+ \frac{x^2}{\nu} \right )^{-(\nu+1)/2}
\end{equation}
and the Zig-Zag process targets $\mu$ as in (\ref{def.mu:1}), having refresh rate that satisfies Assumption \ref{as:2}. For all $k<\nu$, there exist $\beta \in (0,1)$ and $\delta , B>0$ such that for all $(x,\theta) \in \mathbb{R} \times \{ -1,+1 \}$,
\begin{equation*}
\left \| \mathbb{P}_{x,\theta}(Z_t \in \cdot)-\mu(\cdot) \right \|_{TV} \leq \dfrac{BV_{\beta, \delta}(x)}{t^{1+k}}+\dfrac{B}{t^{k}},
\end{equation*}
i.e. for all $k<\nu$, the process is polynomially ergodic of order $k$. Furthermore, for any $k>\nu$, the process is not polynomially ergodic of order $k$. More specifically, there exists a constant $C'$ such that for all $t>0$ large enough 
\begin{equation*}
\left\|\mathbb{P}_{0,+1}(Z_t \in \cdot)-\mu(\cdot)\right\|_{TV}  \geq \frac{C'}{t^{\nu}}.
\end{equation*}
\end{corollary}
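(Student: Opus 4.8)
The upper bound will be a direct consequence of Theorem \ref{prop.drift:1} once Assumption \ref{as:1} is verified, while the lower bound will follow from the finite propagation speed of the Zig-Zag dynamics combined with the polynomial tail of $\pi$. For the upper bound, one computes from (\ref{student.equation:1}) that $U(x)=\frac{\nu+1}{2}\log\left(1+\frac{x^2}{\nu}\right)$, so that $U'(x)=\frac{(\nu+1)x}{\nu+x^2}$ and $|x|\,|U'(x)|=\frac{(\nu+1)x^2}{\nu+x^2}\to\nu+1$ as $|x|\to\infty$. Hence for every $\nu'<\nu$ there is a compact set $C\subset\mathbb{R}$ with $|U'(x)|\geq (1+\nu')/|x|$ for $x\notin C$, i.e.\ Assumption \ref{as:1} holds with exponent $\nu'$. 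Since Assumption \ref{as:2} is part of the hypotheses, given $k<\nu$ we pick $\nu'\in(k,\nu)$ and apply Theorem \ref{prop.drift:1} with $\nu'$ in place of $\nu$, which gives the bound (\ref{tv.bound:3}) and hence polynomial ergodicity of order $k$.

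For the lower bound, I would exploit that the position coordinate moves at unit speed: starting from $(0,+1)$ we have $|X_t|\leq t$ for every $t\geq 0$, deterministically. Thus, for the test set $A_t:=\{(x,\theta)\in E:\ |x|>t\}$ we have $\mathbb{P}_{0,+1}(Z_t\in A_t)=0$, whereas $\mu(A_t)=\pi(\{|x|>t\})$, so the definition of total variation distance as a supremum over measurable sets gives
\[
\left\|\mathbb{P}_{0,+1}(Z_t\in\cdot)-\mu(\cdot)\right\|_{TV}\ \geq\ \mu(A_t)-\mathbb{P}_{0,+1}(Z_t\in A_t)\ =\ \pi(\{|x|>t\}).
\]

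It then remains to estimate the Student tail from below. Since $\pi(x)\sim c_\nu\,|x|^{-(\nu+1)}$ as $|x|\to\infty$ for a suitable constant $c_\nu>0$, one has $\pi(\{|x|>t\})=2\int_t^\infty\pi(x)\,dx\sim \frac{2c_\nu}{\nu}t^{-\nu}$, so there is $C'>0$ with $\pi(\{|x|>t\})\geq C' t^{-\nu}$ for all sufficiently large $t$, which is the claimed inequality. Finally, were the process polynomially ergodic of order $k$ for some $k>\nu$, then for all large $t$ we would have $C' t^{-\nu}\leq\|\mathbb{P}_{0,+1}(Z_t\in\cdot)-\mu\|_{TV}\leq M(0,+1)\,t^{-k}$, forcing $t^{\,k-\nu}\leq M(0,+1)/C'$, which is impossible as $t\to\infty$.

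I do not anticipate a genuine obstacle, since all of the above is elementary; the one point that deserves care is the choice of the separating set $A_t$, and the reason it works is precisely the finite speed of the PDMP, which confines $X_t$ to $[-t,t]$. This is exactly the feature that makes such a lower bound clean for the Zig-Zag process, in contrast with diffusive samplers whose position has full support at every positive time.
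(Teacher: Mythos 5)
Your proposal is correct and follows essentially the same route as the paper's proof. For the upper bound you verify Assumption~\ref{as:1} from the explicit form of $U'$ and apply Theorem~\ref{prop.drift:1} with a slightly reduced exponent $\nu'<\nu$, exactly as the paper does with its $\nu-\delta'$. For the lower bound you use the finite (unit) propagation speed of the Zig-Zag from $x=0$, the set $A_t=\{|x|>t\}$, and the polynomial Student tail to obtain $\mathrm{TV}\geq \pi(\{|x|>t\})\geq C' t^{-\nu}$; this is the same argument as the paper's (your version is in fact stated a touch more carefully, since the paper's displayed chain contains a small typo writing $\pi(A_t^c)$ where $\pi(A_t)$ is meant, though the intended computation $\int_{|x|>t}\pi(x)\,dx$ is the same).
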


Corollary \ref{student.corollary:0} illustrates that Theorem \ref{prop.drift:1} is close to being tight. At least for Student distributions with $\nu$ degrees of freedom, it is established that the process is polynomially ergodic for order $k<\nu$ and it is not for any order $k>\nu$. Whether the process is polynomially ergodic of order $\nu$ is still an open question, but probably of limited practical importance.

\section{Proofs}\label{proofs:00}

Before we prove Theorem \ref{prop.drift:1} we recall the form of the strong generator of the Zig-Zag process.

\begin{definition}
We define $\mathcal{L}$ to be the operator acting on any $f \in C^1(E)$ so that for all $(x,\theta) \in E$,
\begin{equation}
    \mathcal{L}f(x,\theta)=\theta f'(x,\theta)+\left([\theta U'(x)]^++\gamma(x)\right)\left( f(x,-\theta)-f(x,\theta) \right) .
\end{equation}
It can be proven (see for example \cite{davis:18}) that $\mathcal{L}$ is the restriction on $C^1$ of the strong generator of $(Z_t)_{t \geq 0}$.
\end{definition}

 General conditions for polynomial ergodicity
 can be found in Theorems 3.2 and 3.4 of \cite{douc.fort.guillin:09} and Theorem 1.2 of \cite{bakry.cttiaux.guillin:08} (see also Corollary 6 in \cite{fort.roberts:05} for some earlier results on sub-geometric convergence).
Here we use a result found and proved in the presented form in the unpublished lecture notes \cite{hairer:16}.
\begin{theorem}[Hairer \cite{hairer:16} Theorem 4.1]\label{hairer:1}
Let $(X_t)_{t \geq 0}$ a continuous time Markov process on $X$ with strong generator $\mathcal{L}$. Suppose that there exists a function $V:X \rightarrow [1,+\infty)$ and a constant $K$ such that for all $x \in X$
\begin{equation}\label{drift:1}
\mathcal{L}V(x)\leq K- f(V)
\end{equation}
for a function $f:[0,+\infty)\rightarrow [0,+\infty)$ strictly concave, increasing, with $f(0)=0$, $\lim_{s \rightarrow +\infty}f(s)=+\infty$. Suppose further that all the sub-level sets of $V$ are pre-compact and small. Then, the following hold:
\begin{enumerate}
\item There exists a unique invariant measure $\mu$ for the process such that\\ $\int f(V(x)) \mu(dx)<\infty$.
\item Let $H_{f}(u)=\int_{1}^u1/f(s)ds$, then there exists a constant $B>0$ such that for every $x \in X$
\begin{equation}\label{tv.bound:1}
\| \mathbb{P}_x(X_t \in \cdot)-\mu(\cdot) \|_{TV} \leq \dfrac{B V(x)}{H^{-1}_{f}(t)}+\dfrac{B}{f \circ H^{-1}_{f}(t)}.
\end{equation}
\end{enumerate}
\end{theorem}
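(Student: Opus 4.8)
The plan is to read the two conclusions off the drift inequality \eqref{drift:1} by combining a semimartingale estimate with a regeneration argument at the small sub-level sets of $V$, being careful with constants so that the precise rates $H_f^{-1}(t)$ and $f\circ H_f^{-1}(t)$ appear rather than some unspecified subgeometric rate. First I would use that $\mathcal{L}$ is the strong generator, so that $V(X_t)-V(x)-\int_0^t\mathcal{L}V(X_s)\,ds$ is a local martingale; localising along stopping times (pre-compactness of the sub-level sets forces $V$ to be locally bounded, which together with the estimate below rules out explosion) and inserting \eqref{drift:1} gives, for every $x$ and $t$,
\begin{equation*}
\mathbb{E}_x\!\left[V(X_t)\right] + \mathbb{E}_x\!\left[\int_0^t f(V(X_s))\,ds\right] \;\le\; V(x) + Kt .
\end{equation*}
In particular $\mathbb{E}_x[V(X_t)]\le V(x)+Kt$ and $\tfrac1t\mathbb{E}_x\int_0^t f(V(X_s))\,ds\le V(x)/t+K$. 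Fixing $R$ with $f(R)>K$ and setting $A=\{V\le R\}$ (small by hypothesis), \eqref{drift:1} gives $\mathcal{L}V<0$ off $A$, so $A$ is recurrent; together with smallness, a Harris-type theorem yields a unique invariant probability measure $\mu$. Applying Fatou to truncations of $f\circ V$ along the time-averaged occupation measures (which converge to $\mu$) gives $\int f(V)\,d\mu\le K$, i.e. conclusion~(1); since $f(s)\to\infty$, this also forces $\int V\,d\mu<\infty$.

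For conclusion~(2) I would bootstrap \eqref{drift:1} into a family of ``modulated'' drift conditions, the point being that concave transformations of $V$ improve the effective rate at the price of shrinking the Lyapunov function. For concave increasing $C^1$ functions $\varphi$ one checks (directly, or after passing to a skeleton chain and using the discrete interpolation arguments of \cite{douc.fort.guillin:09}) that \eqref{drift:1} propagates to $\mathcal{L}(\varphi\circ V)\le\varphi'(V)\bigl(K-f(V)\bigr)$. The distinguished choice is $\varphi=1+H_f$ (recall $H_f'=1/f$, concave since $1/f$ is decreasing), which gives $\mathcal{L}\bigl(1+H_f(V)\bigr)\le K/f(V)-1$, a strictly negative drift off $A$; composing with further slowly growing concave functions and optimising over the choice produces the modulated return-time bounds
\begin{equation*}
\mathbb{E}_x\!\left[H_f^{-1}(\sigma_A)\right] \;\le\; c_1 V(x)+c_2,
\qquad
\sup_{x\in A}\mathbb{E}_x\!\left[H_f^{-1}(\sigma_A)\right]<\infty,
\end{equation*}
where $\sigma_A$ is the return time to $A$. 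The identity $(H_f^{-1})'=f\circ H_f^{-1}$ is exactly what will link the two rates in \eqref{tv.bound:1}.

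Finally I would run the coupling associated with the small set $A$: two copies of the process evolve (under any coupling) until they are simultaneously in $A$, at which moment the minorization guaranteed by smallness lets them coalesce with probability at least $\epsilon$. The coalescence time $T$ is dominated by a geometric number of excursions away from $A$, so feeding the modulated return-time bound into a renewal estimate yields a control of the form $\mathbb{P}_{x,y}(T>t)\le B'(V(x)+V(y))/H_f^{-1}(t)+B'/(f\circ H_f^{-1}(t))$, the weaker second term being the contribution of the residual lifetime of the excursion in progress, which is one rate factor worse precisely because one integrates the tail of the excursion length. Since $\|\mathbb{P}_x(X_t\in\cdot)-\mu(\cdot)\|_{TV}\le\int\mathbb{P}_{x,y}(T>t)\,\mu(dy)$ and $\int V\,d\mu<\infty$ from the first part, this is exactly \eqref{tv.bound:1}.

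I expect the main obstacle to be the bootstrapping step — turning \eqref{drift:1} into a return-time estimate with rate \emph{exactly} $H_f^{-1}$ rather than merely some subgeometric rate — together with the matching renewal bookkeeping that pins down the two-term form of \eqref{tv.bound:1} with the correct exponents. This is where strict concavity of $f$, $f(0)=0$ and $f(s)\to\infty$ are genuinely used, and where the small-set structure has to be handled with care in the continuous-time setting.
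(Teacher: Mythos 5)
This theorem is not proved in the paper at all: it is imported verbatim from Hairer's lecture notes \cite{hairer:16} (general versions are in \cite{douc.fort.guillin:09}), so there is no internal proof to compare your attempt against. Judged on its own merits, your outline follows the standard route those sources take --- Dynkin's formula plus \eqref{drift:1} for moment bounds and for existence/uniqueness of $\mu$ with $\int f(V)\,d\mu<\infty$; concave transformations of $V$ to produce modulated drift conditions; return-time moments for a small sub-level set; and a coupling/renewal argument converting these into the two-term total variation bound. The architecture is the correct one.

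However, as written you assert rather than prove exactly the two steps that carry the content of the theorem. From $\mathcal{L}\bigl(1+H_f(V)\bigr)\le K/f(V)-1$ one only obtains $\mathbb{E}_x[\sigma_A]\lesssim H_f(V(x))$; the bound you actually need, $\mathbb{E}_x[H_f^{-1}(\sigma_A)]\le c_1V(x)+c_2$, is a strictly stronger, higher-moment statement (note $H_f^{-1}$ is convex, so Jensen goes the wrong way), and ``composing with further slowly growing concave functions and optimising'' is precisely the nontrivial interpolation machinery of \cite{douc.fort.guillin:09} (or, in the polynomial case, an iterated family of drift conditions); it does not follow from the single transformed drift inequality. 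The renewal bookkeeping is equally delicate: since $H_f^{-1}$ is convex it is only subadditive up to constants, and naively summing a geometric number of excursion lengths produces multiplicative factors that can overwhelm the coalescence probability $\epsilon$; the literature avoids this via first-entrance/last-exit decompositions or the paired-rate (Young-duality) estimates, which is exactly where the two distinct rates $H_f^{-1}(t)$ and $f\circ H_f^{-1}(t)$ in \eqref{tv.bound:1} come from. There are also continuous-time technicalities left implicit (membership of $\varphi\circ V$ in the domain of the strong generator, ruling out explosion before invoking Dynkin, and arranging for two copies to be in the small set simultaneously, which smallness of $A$ alone does not give). So: right strategy, essentially the one in \cite{hairer:16} and \cite{douc.fort.guillin:09}, but the decisive quantitative estimates --- which are the theorem --- are still missing from your sketch, as you yourself anticipate in the final paragraph.
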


\begin{proof}[Proof of Theorem \ref{prop.drift:1}]
Suppose that $U$ satisfies Assumption \ref{as:1} and let $k<\nu$. Select $a$ such that $k=a/(1-a)$ so that $a<1-1/(1+\nu)$. For any $\tilde{\beta} \in (0,1)$ we have that there exists a $c_0>0$ such that for all $x \notin C$
\begin{align*}
\left( V_{\tilde{\beta}, \delta}(x,\theta) \right)^{1-a}|U'(x)|\geq c_0 \exp \{ \tilde{\beta} (1-a)(1+\nu) \log |x| \}\dfrac{1+\nu}{|x|}=c_0(1+\nu) \left | x \right |^{\tilde{\beta} (1-a) (1+\nu)-1}
\end{align*}
Since $(1-a)(1+\nu)-1>0$, there exists a $\beta$ close to $1$ such that $\beta (1-a) (1+\nu)-1>0$, so 
\begin{equation}\label{poly.ergo.proof.eq:1}
\lim_{|x| \rightarrow \infty}V^{1-a}_{\beta}(x,\theta)|U'(x)|=+\infty.
\end{equation}

\noindent Now, $V_{\beta, \delta} \in C^1$ and $\lim_{|x|\rightarrow \infty}V_{\beta, \delta}(x,\theta)=+\infty$ so all the level sets are compact. Since the process is positive Harris recurrent and some skeleton is irreducible (see \cite{bierkens.roberts.zitt:2019}) we get from Proposition 6.1 of \cite{meyn.tweedie.2:93} that the level sets are also small. Since $\lim_{|x|\rightarrow \infty}U(x)=+\infty$ it is clear that $V_{\beta, \delta}$ is bounded below away from $0$ so by multiplying with an appropriate constant we can assume that $V_{\beta, \delta}(x,\theta)\geq 1$ for all $(x,\theta)$.
We calculate
\begin{equation*}
\mathcal{L}V_{\beta, \delta}(x,\theta)=V_{\beta, \delta}(x,\theta)\left( \theta \beta U'(x)+([\theta U'(x)]^++\gamma(x))\left( \exp \{ -2 \theta \sgn(x) \delta \}-1 \right) \right).
\end{equation*}
 Note that due to Assumption \ref{as:1} and that $U(x)\xrightarrow{|x| \rightarrow \infty} +\infty$, we get that there exists a compact set $\tilde{C}$ such that for all $x \notin \tilde{C}$ $\sgn(U'(x))=\sgn(x)$. Therefore, when $x \notin \tilde{C}$ and $\theta \sgn(x)>0$,
\begin{align*}
\dfrac{\mathcal{L}V_{\beta, \delta}(x,\theta)}{V_{\beta, \delta}^a(x,\theta)}\leq V_{\beta, \delta}^{1-a}(x,\theta)|U'(x)|\left[ \beta+\left( \dfrac{\gamma(x)}{|U'(x)|}+1 \right) \left( \exp \{ -2 \delta \}-1 \right)\right]
\end{align*}
and when $\theta \sgn(x) <0$ 
\begin{equation*}
\dfrac{\mathcal{L}V_{\beta, \delta}(x,\theta)}{V_{\beta, \delta}^a(x,\theta)}\leq V_{\beta, \delta}^{1-a}(x,\theta)|U'(x)|\left[ -\beta +\dfrac{\gamma(x)}{|U'(x)|}(\exp \{ 2 \delta \} -1) \right].
\end{equation*}
Overall we have for $x \notin \tilde{C}$
\begin{equation}\label{gen.upper.bound:1}
\dfrac{\mathcal{L}V_{\beta, \delta}(x,\theta)}{V_{\beta, \delta}^a(x,\theta)}\leq V_{\beta, \delta}^{1-a}(x,\theta)|U'(x)| \max \left\{ \beta+\left( \dfrac{\gamma(x)}{|U'(x)|}+1 \right) \left( \exp \{ -2 \delta \}-1 \right) , \left[ -\beta +\dfrac{\gamma(x)}{|U'(x)|}(\exp \{ 2 \delta \} -1) \right] \right\}.
\end{equation}
Recall that $\dfrac{\gamma(x)}{|U'(x)|}\xrightarrow{|x| \rightarrow \infty}0$. Fix $\delta>-1/2\log(1-\beta)$ and by possibly increasing $\tilde{C}$, we have that there exists a constant $c'>0$ such that for all $x \notin \tilde{C}$ 
\begin{equation*}
\max \left\{ \beta+\left(\dfrac{\gamma(x)}{|U'(x)|}+1 \right) \left( \exp \{ -2 \delta \}-1\right), -\beta +\dfrac{\gamma(x)}{|U'(x)|}\left(\exp \{ 2 \delta \} -1 \right) \right\}<-c'<0.
\end{equation*}
Combining this with (\ref{poly.ergo.proof.eq:1}) and (\ref{gen.upper.bound:1})
we get that $V_{\beta, \delta}$ satisfies (\ref{drift:1}) with $f(u)=cu^a$ for $c=c'/2$ and with $K$ being an appropriate constant that bounds the continuous function $\mathcal{L}V_{\beta, \delta}+ f(V_{\beta, \delta})$ inside $\tilde{C}$.

\noindent Therefore, all the conditions of Theorem \ref{hairer:1} are satisfied. Note that
$H_{f}(s)=\int_1^s1/f(u)du=c^{-1}\int_1^su^{-a}du=\dfrac{1}{c(1-a)}(s^{1-a}-1)$ so  
\begin{equation*}
H^{-1}_{f}(t)=\left ( 1+c(1-a)t \right)^{1/(1-a)}
\end{equation*}
and therefore
\begin{equation*}
f \circ H^{-1}_{f}(t)= c \left ( 1+c(1-a)t \right )^{a/(1-a)}
\end{equation*}
Since we picked $a$ such that $k=a/(1-a)$, meaning that $k+1=1/(1-a)$, (\ref{tv.bound:3}) follows. 
\end{proof}

\begin{proof}[Proof of Remark \ref{remark.2}]
Fix $k<\nu$ and set $a$ such that $k=a/(1-a)$, therefore $1-a=1/(1+k)$. Our goal is to find an appropriate upper bound on $\frac{\gamma(x)}{|U'(x)|}$ that will guarantee that the RHS of (\ref{gen.upper.bound:1}) is negative for appropriately chosen $\beta$ and $\delta$. For this it suffices to prove that for some $\beta,\delta >0$, the maximum  appearing in the RHS of (\ref{gen.upper.bound:1}) is negative, bounded away from zero and that (\ref{poly.ergo.proof.eq:1}) holds. These two conditions will guarantee that the drift condition (\ref{drift:1}) holds for $V_{\beta , \delta}$ and we can conclude similar to the proof of Theorem \ref{prop.drift:1}.
Since from the proof of that theorem
\begin{align*}
\left( V_{\tilde{\beta}, \delta}(x,\theta) \right)^{1-a}|U'(x)|\geq c_0(1+\nu) \left | x \right |^{\tilde{\beta} (1-a) (1+\nu)-1},
\end{align*}
in order to guarantee (\ref{poly.ergo.proof.eq:1}), it suffices to pick $\beta=(1+\eta)(1+k)/(1+\nu)$ for some $\eta >0$. From the discussion after (\ref{gen.upper.bound:1}), we can pick $\delta=-\frac{1}{2}(1+\eta)\log(1-\beta)=-\frac{1}{2}(1+\eta)\log\left(1-(1+\eta)(1+k)/(1+\nu)\right)$. With this choice of $\beta, \delta$ the first part of the maximum of the RHS in (\ref{gen.upper.bound:1}) will be negative, bounded away from zero. The second part of the maximum writes
\begin{equation*}
-\beta +\dfrac{\gamma(x)}{|U'(x)|}(\exp \{ 2 \delta \} -1)=-(1+\eta)\frac{1+k}{1+\nu}+\dfrac{\gamma(x)}{|U'(x)|}\left( \left(\frac{1}{(1-(1+\eta)(1+k)/(1+\nu))}\right)^{(1+\eta)}  -1\right).
\end{equation*}
Therefore, solving the inequality $-\beta +\dfrac{\gamma(x)}{|U'(x)|}(\exp \{ 2 \delta \} -1)<-\eta$ (which would guarantee that the maximum on the RHS of (\ref{gen.upper.bound:1}) is negative, bounded away from zero), we get
\begin{equation*}
    \frac{\gamma(x)}{|U'(x)|} \leq M(k)
\end{equation*}
where $M(k)$ as in (\ref{M_k}). 
\end{proof}

\begin{proof}[Proof of Corollary \ref{student.corollary:0}]
Let $\pi$ as in (\ref{student.equation:1}). Then for all $\delta'>0$ there exists a compact set $C$ with $0 \in C$, such that for all $x \notin C$
\begin{align*}
|U'(x)|=\dfrac{(\nu+1)|x|}{\nu+x^2}\geq \dfrac{1+\nu-\delta'}{|x|}
\end{align*}
Therefore for every $\delta'>0$, the distribution satisfies Assumption \ref{as:1} where the $\nu$ in that assumption is equal to $\nu-\delta'$. From Proposition \ref{prop.drift:1}, for all $k<\nu$, there exists  a $\beta \in (0,1), \delta >0$ and $B>0$ such that for all $(x,\theta) \in \mathbb{R} \times \{ -1,+1 \}$,
\begin{equation*}
\left \| \mathbb{P}_{x,\theta}(Z_t \in \cdot)-\mu(\cdot) \right \|_{TV} \leq \dfrac{B V_{\beta, \delta}(x)}{t^{1+k}}+\dfrac{B}{t^{k}}.
\end{equation*}
Now, suppose that the Zig-Zag starts from $x=0$, $\theta=+1$. There exists a $C_0$ and $K>0$ such that for all $|x| \geq K$, $\pi(x)\geq C_0 |x|^{-\nu-1}$. Fix a time $t>K$. Let $A_t= \{ x: x>t \}$. After less or equal to time $t$ has passed, the Zig-Zag will not have hit $A_t$.  We therefore get for all $t > K$,
\begin{align*}
\|\mathbb{P}_{0,+1}(Z_t \in \cdot)-\mu(\cdot)\|_{TV} \geq & 
\left | \mathbb{P}_{0,+1}(X_t \in A_t^c)-\pi(A_t^c) \right |=\pi(A_t^c)= \int_{|x|>t} \pi(x)dx \geq \\
& \geq 2C_0\int_t^{+\infty}x^{-\nu-1}dx=\dfrac{C_0}{\nu}\frac{1}{t^{\nu}}. %\qedhere
\end{align*}
\end{proof}

\section{Discussion}
\noindent It is interesting to compare the Zig-Zag polynomial convergence rates with the ones of the one-dimensional Random Walk Metropolis (RWM) and the Metropolis-adjusted Langevin algorithm (MALA) algorithms. In fact, it is shown in \cite{jarner.tweedie:03}, Propositions 4.1 and 4.3 (see also \cite{jarner.roberts:07}) that when targeting a Student distribution with $\nu$ degrees of freedom with any finite variance proposal RWM or with MALA, one gets polynomial order of convergence $(\nu/2)^-$, i.e. for any $\epsilon>0$, the polynomial rate of convergence is at least $\nu/2-\epsilon$. It is however proven not to be $\nu/2$. In \cite{jarner.roberts:07} the authors provide these lower bounds for the convergence rates, while in \cite{jarner.tweedie:03} they provide the upper bound. As proven in this note, the one-dimensional Zig-Zag has polynomial rate of convergence $\nu^-$ in the same setting, which is better than RWM or MALA. This phenomenon was also observed in simulations in \cite{bierkens.duncan:17}. We conjecture that the advantage of the Zig-Zag is due its momentum, which, in a one-dimensional, uni-modal setting with zero refresh rate, will force the process to never switch direction before it hits the mode. This diminishes any possible diffusive behaviour of the process at the tails and helps the algorithm converge faster. We should note here that better polynomial rates (and more precisely, arbitrarily better rates) can be achieved for the Random Walk Metropolis (RWM) if one introduces a proposal with heavier tails. However, a natural analogue of this modification is to allow the Zig-Zag to speed up and move faster in areas of lower density. This idea is further discussed in \cite{vasdekis.roberts.suzz:21} and proven to be able to provide exponentially ergodic algorithms even on heavy tailed targets, which can outperform the simple Zig-Zag in the sense of having better effective sample size per number of likelihood evaluations.

%If your paper includes appendices, then precede the first of them by the command
%\appendix
%and then carry on using the \section and \subsection commands, as above.

%\section{The first appendix}

%If you include EPS (encapsulated postscript) figures in your paper,
%then please use the following commands:
%\begin{figure}
%\begin{center}
%\includegraphics{.eps}
%\caption{Caption text.}\label{}
%\end{center}
%\end{figure}

%%%%%%%%%%Declarations%%%%%%%%%%

\acks % Place the text of your acknowledgements after the \acks (or \Acks) command. This will generate the heading "Acknowledgements". If you wish to make only one acknowledgement, use \ack (or \Ack).
\noindent The authors would like to acknowledge Professor Anthony Lee for an indication of the proof of the lower bound of the total variation distance in Corollary 2.5. They would also like to acknowledge Andrea Bertazzi, George Deligiannidis and Krzysztof Latuszy\'{n}ski for helpful discussions. Furthermore, they would like to thank the anonymous referee and the associated editor for the helpful comments that improved this note.
%We wish to thank...

\fund % Place any funding information for this work after the \fund (or \Fund) command.
\noindent 
G. Vasdekis was supported by the EPSRC as part of the MASDOC DTC (EP/HO23364/1) and the Department of Statistics at the University of Warwick (EP/N509796/1). He is also supported by NERC (NE/T00973X/1) under Dr. Richard Everitt. G. O. Roberts was supported by EPSRC under the CoSInES (EP/R018561/1) and Bayes for Health (EP/R034710/1) programmes.%Use this section to describe the funding bodies related to this article. If there are no funding bodies to include in this section, please say ``There are no funding bodies to thank relating to this creation of this article.''

\competing % Place any information on competing interests after the \competing (or \Competing) command.
\noindent %Use this section to describe any competing interests to declare related to this article. If there are no competing interests to declare in this section, please say ``
There were no competing interests to declare which arose during the preparation or publication process of this article.
%''

%\data % Place any information on data related to the work in your article after the \data (or \Data) command. Omit this command/section and text if it is not relevant to your article.
%\noindent The data related to the simulations found in Section 2 can be found at...

%\supp \noindent The supplementary material for this article can be found at http://doi.org/10.1017/[TO BE SET]. % Delete this line if there are no supplementary files related to this article. If there are supplementary files related to your article, leave the line unchanged.

%%%%%%%%%%%%Reference list%%%%%%%%%%%%%%
%
% References should be in the following form (or the BibTeX file
% apt.bst should be used):
%
% For a journal:
% Surname, Initial (year). Title of paper. {\em Journal title}
% {\bf Vol,} page--range.
%
% For a book:
% Surname, Initial (year). {\em Book title}. Publisher, Address.
%
% Note the following example of a reference list.
\bibliographystyle{APT} % Style BST file (imsart-number.bst or imsart-nameyear.bst)
\bibliography{Biblio}       % Bibliography file (usually '*.bib')

%\begin{thebibliography}{99}
%\bibliographystyle{APT}
%\footnotesize

%\bibitem{ref1}
%{\sc Ball, K. and Chain, H.} (1988). {\em Kurtosis: A Critical
%Review}, 2nd~edn. John Wiley, New York.

%\bibitem{ref2}
%{\sc Boyd, W.} (1978). Hyperbolic distributions. Doctoral Thesis,
%University of Boston School of Mathematics.

%\bibitem{ref3}
%{\sc Sichel, H.~S., Kleingeld, W.~J. and Assibey-Bonsu, W.}
%(1992).  A comparative study of three frequency-distribution
%models for use in ore valuation. {\em J. S. Afr. Inst. Min. Met.}
%{\bf 92,} 91--99.

%\end{thebibliography}

\end{document}